\newcommand{\C}{\mathbb{C}}
\newcommand{\QQ}{\mathbb{Q}}
\newcommand{\NN}{\mathbb{N}}
\newcommand{\PP}{\mathbb{P}}
\newcommand{\OO}{\mathcal O}
\newcommand{\MM}{\mathcal M}
\newcommand{\ima}{\hbox{Im}}
\newcommand{\rom}{\romannumeral}
\newcommand{\alb}{\hbox{Alb}}
\newtheorem{theorem}{Theorem}[section]
\newtheorem{corollary}[theorem]{Corollary}
\newtheorem{proposition}[theorem]{Proposition}
\newtheorem{conjecture}[theorem]{Conjecture}
\newtheorem{remark}[theorem]{Remark}
\newtheorem{definition}[theorem]{Definition}
\newtheorem{convention}{Conventions}
\newtheorem{nonumbering}{Theorem}
\newtheorem{nonumberingt}{Acknowledgements}
\begin{document}
\author[Robert Laterveer]
{Robert Laterveer}

\address{Institut de Recherche Math\'ematique Avanc\'ee,
CNRS -- Universit\'e 
de Strasbourg,\
7 Rue Ren\'e Des\-car\-tes, 67084 Strasbourg CEDEX,
FRANCE.}
\email{robert.laterveer@math.unistra.fr}

\title{Algebraic cycles on surfaces with $p_g=1$ and $q=2$}

\begin{abstract} This note is about an old conjecture of Voisin, which concerns zero--cycles on the self--product of surfaces of geometric genus one. We prove this conjecture for surfaces with $p_g=1$ and $q=2$.
\end{abstract}

\keywords{Algebraic cycles, Chow groups, motives, finite--dimensional motives, surfaces}

\subjclass{Primary 14C15, 14C25, 14C30.}

\maketitle

\section{Introduction}

For $X$ a smooth projective variety over $\C$, let $A^jX$ denote the Chow groups of codimension $j$ algebraic cycles on $X$ modulo rational equivalence.
For varieties of dimension larger than $1$,
the Chow groups are still incompletely understood. For example, there is the famous conjecture of Bloch:

\begin{conjecture}[Bloch \cite{B}]\label{bloch} Let $S$ be a smooth projective complex surface. The following are equivalent:

\noindent
(\rom1) the Albanese morphism $A^2_{}(S)\to\alb(S)$ is injective;

\noindent
(\rom2) the geometric genus $p_g(S)$ is $0$.
\end{conjecture}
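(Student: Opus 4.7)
\emph{Plan.} The two implications in Bloch's conjecture require very different techniques, so I would handle them separately. The direction $(\rom1)\Rightarrow(\rom2)$ is the classical and settled one, a consequence of Mumford's infinite-dimensionality theorem. The direction $(\rom2)\Rightarrow(\rom1)$ is the deep part of the conjecture, and in full generality it is still open.

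For $(\rom1)\Rightarrow(\rom2)$ I would argue contrapositively. Assume $p_g(S)>0$ and pick a nonzero $\omega\in H^0(S,\Omega_S^2)$. Mumford's argument then produces, for every $n$, a Zariski-dense open $U\subset S^{(n)}\times S^{(n)}$ of pairs $(z,z')$ of effective zero-cycles of degree $n$ such that $\int_{z-z'}\omega\neq 0$. Since $\omega$ restricts trivially to $\alb(S)$, any difference $z-z'$ of such a pair lies in the kernel of $A^2(S)\to\alb(S)$ yet pairs nontrivially with $\omega$, so is nonzero. This uncountable family of classes rules out injectivity of the Albanese map.

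For $(\rom2)\Rightarrow(\rom1)$, the only available strategy is Bloch--Srinivas diagonal decomposition combined with Kimura--O'Sullivan finite-dimensionality of $h(S)$. When $p_g(S)=0$, Lefschetz's $(1,1)$-theorem forces the transcendental part of $H^2(S,\QQ)$ to vanish, so the transcendental summand $t_2(S)$ in the refined Chow--K\"unneth decomposition of $h(S)$ has trivial Betti realization. Granting that $h(S)$ is Kimura finite-dimensional, a motive with trivial numerical realization is itself zero, so $t_2(S)=0$. This yields a decomposition in $A^2(S\times S)_\QQ$ of the form
\[
\Delta_S \;=\; Z_{\text{alg}} + Z_1 + Z_2,
\]
with $Z_{\text{alg}}$ a combination of products of divisor classes, $Z_1$ supported on $D\times S$ for some divisor $D\subset S$, and $Z_2$ supported on $S\times\{\text{pt}\}$. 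Acting with this decomposition on an Albanese-trivial zero-cycle returns zero, giving injectivity on $A^2(S)_\QQ$; Roitman's theorem then upgrades this to integral coefficients.

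The main obstacle is precisely what makes Bloch's conjecture famous: proving the hypothesis needed to run the second implication. Kimura finite-dimensionality of $h(S)$ is itself open for a general surface of general type, and the required decomposition of the diagonal is essentially equivalent to the desired conclusion. Consequently no ``soft'' argument can settle $(\rom2)\Rightarrow(\rom1)$ in full generality: every known case exploits extra geometric structure on $S$ --- a dominant map from a product of curves, a symplectic involution whose fixed locus is controllable, a pencil or fibration --- to manufacture enough correspondences to realize the diagonal decomposition by hand. The present paper, which concerns Voisin's refinement of these ideas on $S\times S$ in the regime $p_g=1$, $q=2$, presumably leverages precisely such extra structure coming from the Albanese morphism of $S$.
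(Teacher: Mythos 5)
This statement is labelled as a \emph{conjecture} in the paper, and the paper offers no proof of it: it only remarks that the implication $(\rom1)\Rightarrow(\rom2)$ is a theorem (citing \cite{Mum} and \cite{BS}) and that $(\rom2)\Rightarrow(\rom1)$ is the open, conjectural part. Your write-up is consistent with this: you correctly identify which direction is settled, which is open, and why no soft argument can close the gap. So there is nothing in the paper to compare your argument against line by line; the only meaningful check is whether your two sketches are sound.

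Your sketch of $(\rom2)\Rightarrow(\rom1)$ is the standard conditional argument and is correct as far as it goes: $p_g=0$ plus Lefschetz $(1,1)$ gives $H^2_{tr}(S)=0$, so $t_2(S)$ has trivial homological realization, and \emph{granting} Kimura finite-dimensionality the nilpotence theorem kills $t_2(S)$ in $\MM_{\rm rat}$, whence $A^2_{AJ}(S)_{\QQ}=0$ by the Kahn--Murre--Pedrini computation $A^\ast(t_2(S))_{\QQ}=A^2_{AJ}(S)_{\QQ}$; Rojtman then handles torsion. You are right that the hypothesis is the whole difficulty. Your sketch of $(\rom1)\Rightarrow(\rom2)$ is correct in spirit but loose in one place: the expression $\int_{z-z'}\omega$ is not literally a well-defined pairing of a zero-cycle against a $2$-form, and the clause ``$\omega$ restricts trivially to $\alb(S)$'' is not the mechanism. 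Mumford's actual argument is that a nonzero $\omega\in H^0(S,\Omega^2_S)$ induces a nonzero holomorphic $2$-form on each symmetric product $S^{(n)}$, which forces the fibres of $S^{(n)}\times S^{(n)}\to A^2(S)$ to be small and hence $A^2(S)$ to be infinite-dimensional; since $\alb(S)$ is a finite-dimensional abelian variety, the Albanese map cannot be injective. I would restate that step in those terms. With that repair, your account matches the paper's framing exactly.
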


The implication from (\rom1) to (\rom2) is actually a theorem \cite{Mum}, \cite{BS}. The conjectural part is the reverse implication, which is wide open for surfaces of general type (cf. \cite{BKS}, \cite{PW} for cases where conjecture \ref{bloch} is verified).

The next step is to consider surfaces $S$ with $p_g=1$. Here, the Albanese kernel $A^2_{AJ}(S)$ is huge (it is ``infinite--dimensional'', in a certain sense \cite{Mum}). Yet, at least conjecturally, this huge group has controlled behaviour on the self--product:

\begin{conjecture}[Voisin \cite{V9}\footnote{To be precise, conjecture \ref{vois} is not stated verbatim in \cite{V9}. Conjecture \ref{vois} is, however, close in spirit to \cite{V9}; for instance, in case $q(S)=0$ conjecture \ref{vois} coincides with \cite[Section 3 conjecture (*)]{V9}.}]\label{vois} Let $S$ be a smooth projective complex surface. The following are equivalent:

\noindent
(\rom1) 
 For any $a,a^\prime\in A^2_{AJ}(S)$, we have
  \[ a\times a^\prime=a^\prime\times a\ \ \ \hbox{in}\ A^4(S\times S)\ .\]
  
 \noindent
 (\rom2) the geometric genus $p_g(S)$ is $\le 1$.
 \end{conjecture}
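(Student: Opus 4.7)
The plan is to reformulate condition (\rom1) motivically and then exploit the structure coming from $q(S)=2$, namely the Albanese map to an abelian surface. Using Murre's Chow--K\"unneth decomposition for surfaces, I would isolate the transcendental motive $t_2(S)$, characterised by $A^2(t_2(S))=A^2_{AJ}(S)$ and cohomological realisation $H^2_{\mathrm{tr}}(S)$. The difference $a\times a'-a'\times a$ lives in $A^4(\wedge^2 t_2(S))$, so condition (\rom1) amounts to the vanishing of the natural map
\[ \wedge^2 A^2_{AJ}(S)\ \longrightarrow\ A^4(\wedge^2 t_2(S))\ . \]
The decisive cohomological observation is that $p_g(S)=1$ forces $\wedge^2 H^{2,0}(S)=0$, so $\wedge^2 H^2_{\mathrm{tr}}(S)$ has trivial $(4,0)$-component; the goal is then to promote this Hodge-theoretic fact to the statement that $\wedge^2 t_2(S)$ is a summand of the motive of a threefold inside $S\times S$.

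To produce the required support I would use the Albanese morphism $a_S\colon S\to A=\alb(S)$, where $A$ is a $2$-dimensional abelian variety. When $a_S$ is generically finite, the pullback $a_S^*\colon H^{2,0}(A)\to H^{2,0}(S)$ is an isomorphism of $1$-dimensional spaces, so $t_2(S)$ and $t_2(A)\subset\mathfrak h^2(A)=\wedge^2\mathfrak h^1(A)$ share their ``top'' Hodge component; this should let one identify $t_2(S)$ with $t_2(A)$ modulo a correspondence supported on a divisor. On the abelian side, the Deninger--Murre decomposition makes $\wedge^2 t_2(A)$ explicit: elementary representation theory of $\mathrm{GL}(\mathfrak h^1(A))$ exhibits it as a summand of $\mathfrak h^1(A)\otimes\mathfrak h^3(A)$, hence of the motive of a threefold, and this visibly contributes nothing to the $(4,0)$ part. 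When instead $a_S(S)$ is a curve $C$ of genus $2$, the surface $S$ fibres over $C$, and a Chow--K\"unneth decomposition adapted to the fibration reduces matters to a statement on $C\times C$ that is essentially trivial since the transcendental motive of a curve vanishes.

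The last step is to invoke Kimura--O'Sullivan finite-dimensionality, available for $S$ because it is motivically dominated by curves and an abelian surface, combined with the nilpotence theorem: a cohomologically trivial morphism between finite-dimensional motives is nilpotent, and a nilpotent idempotent is zero. This promotes the coniveau statement to the sought vanishing of $A^4(\wedge^2 t_2(S))$. I expect the main difficulty to lie in the ``lifting'' step of the second paragraph: matching the cohomological isomorphism $a_S^*\colon H^{2,0}(A)\cong H^{2,0}(S)$ to an actual motivic identification between $t_2(S)$ and $t_2(A)$ up to a coniveau-$1$ correction. This typically requires a careful correspondence computation — perhaps along the lines of Voisin's spreading principle — to show that the discrepancy between the two transcendental motives is genuinely supported on a divisor in $S\times A$.
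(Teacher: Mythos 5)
Your overall strategy---isolate the transcendental motive $t_2(S)$, compare it with that of an abelian surface via the Albanese map, and use Kimura--O'Sullivan finite-dimensionality to upgrade a cohomological comparison to a Chow-theoretic one---is indeed the skeleton of the paper's proof of the relevant implication (the direction (\rom1)$\Rightarrow$(\rom2) is standard Bloch--Srinivas and is not at issue). But two of your steps have genuine gaps. The more serious one is your treatment of the case where the Albanese image is a curve: this case is \emph{not} essentially trivial. There $S=(C\times E)/G$ with $E$ elliptic and $C/G$ of genus $2$, and $S$ still has $p_g=1$, so $H^2_{tr}(S)\neq 0$ and $A^2_{AJ}(S)$ is infinite-dimensional in Mumford's sense. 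A Chow--K\"unneth decomposition adapted to the fibration over $C/G$ cannot reduce the problem to a statement on $C\times C$: the transcendental part of $H^2(S)$ lives in $(H^1(C)\otimes H^1(E))^G$ and is invisible on the base curve, and the transcendental motive of a curve being zero is irrelevant to it. The paper handles exactly this case by quoting Ram\'on Mar\'{\i}'s theorem that $t_2(S)\cong t_2(A)$ in $\MM_{\rm hom}$ for some abelian surface $A$, upgraded to $\MM_{\rm rat}$ by finite-dimensionality; some such nontrivial structural input is unavoidable here.

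The second gap is in your endgame. The nilpotence theorem kills \emph{homologically trivial} correspondences between finite-dimensional motives; it cannot convert the Hodge-theoretic fact $\wedge^2 H^{2,0}(S)=0$ into the vanishing of $A^4(\wedge^2 t_2(S))$, because $\wedge^2 H^2_{tr}(S)$ is nonzero, so there is no homologically trivial idempotent to which nilpotence applies. Likewise, even a genuine Chow-level identification of $\wedge^2 t_2(A)$ with a summand of $h^1(A)\otimes h^3(A)$ (hence, up to a Tate twist, with a piece of the motive of a threefold) does not force its $A^4$ to vanish: threefolds with nonzero $H^1$ can have enormous groups of zero-cycles, and ``supported in coniveau $\geq 1$'' is not a statement one can feed back into Chow groups without the generalized Bloch/Hodge conjectures (indeed, the paper \emph{deduces} the coniveau statement, corollary \ref{GH}, from the cycle-theoretic theorem, not the other way around). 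What is actually needed on the abelian side is Voisin's explicit computation that $a\times a'=(-1)^g\, a'\times a$ for $a,a'\in A^g_{(g)}(A)$, a concrete argument with the Beauville decomposition. Once you replace your two improvised steps by the corresponding citations (Ram\'on Mar\'{\i}'s structure theorems, which also give that a minimal non-abelian $S$ with $p_g=1$, $q=2$ is isogenous to a product and hence has finite-dimensional motive, and Voisin's result for abelian surfaces), your outline becomes the paper's proof.
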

 
 Again, the implication from (\rom1) to (\rom2) is actually a theorem (this can readily be proven \`a la Bloch--Srinivas \cite{BS}). The conjectural part is the reverse implication. This has been proven in some special cases \cite{V9}, \cite{moi}, but is still wide open for a general $K3$ surface.
 
 The aim of this modest note is to exhibit some more examples of surfaces verifying conjecture \ref{vois}. The main result states that conjecture \ref{vois} is verified for the surfaces mentioned in the title:
 
 \begin{nonumbering}[=theorem \ref{main}] Let $S$ be a smooth projective surface with $p_g(S)=1$ and $q(S)=2$. Let $a,a^\prime\in A^2_{AJ}(S)$. Then
  \[ a\times a^\prime=a^\prime\times a\ \ \ \hbox{in}\ A^4(S\times S)\ .\] 
  \end{nonumbering}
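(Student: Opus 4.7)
The plan is to reduce the conjecture to its analogue for the Albanese variety $A:=\alb(S)$, which is an abelian surface with $p_g(A)=1$ and $q(A)=2$. For abelian surfaces Voisin's conjecture is known: Kimura's theorem gives finite-dimensionality of $h(A)$, and $p_g(A)=1$ ensures vanishing of the top Hodge type $H^{4,0}$ of the antisymmetric motive $\wedge^2 t_2(A)$, so the finite-dimensional version of Bloch's conjecture forces $A^4(\wedge^2 t_2(A))=0$, which is equivalent to Voisin's conjecture for $A$.

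The first step is the motivic reformulation. By Kahn--Murre--Pedrini, $S$ admits a refined Chow--K\"unneth decomposition $h(S)=\bigoplus_i h^i(S)$ with $h^2(S)=h^2_{alg}(S)\oplus t_2(S)$ and $A^2(t_2(S))=A^2_{AJ}(S)\otimes\QQ$. In these terms Voisin's conjecture for $S$ is equivalent to the Chow-theoretic vanishing
\[ A^4(\wedge^2 t_2(S))=0, \]
which is cohomologically trivial because $p_g(S)=1$ forces $\wedge^2 H^{2,0}(S)=0$.

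The second step, which is the heart of the argument, is to compare $t_2(S)$ with $t_2(A)$ via the Albanese morphism $\alpha\colon S\to A$. When $\alpha$ is generically finite, the pullback $\alpha^*\colon H^0(A,\Omega^2_A)\to H^0(S,\Omega^2_S)$ is an isomorphism of $1$-dimensional spaces; combining this with finite-dimensionality of $h(A)$ and a Bloch--Srinivas-type correspondence argument (using the graph $\Gamma_\alpha$ as correspondence and the projector $\tfrac{1}{\deg\alpha}\,\Gamma_\alpha^t\circ\Gamma_\alpha$ to split off $h(A)$ from $h(S)$) should yield an isomorphism of Chow motives $t_2(S)\cong t_2(A)$. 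When the Albanese image is a curve $C\subset A$, a separate analysis of the resulting fibration $S\to C$ is needed, exploiting the product/fibration structure to reduce the transcendental motive to one already under control. Granted the identification, one concludes $A^4(\wedge^2 t_2(S))\cong A^4(\wedge^2 t_2(A))=0$, as required.

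The main obstacle is precisely the motivic identification $t_2(S)\cong t_2(A)$ in the second step. It is a Bloch-type assertion, namely that the complement of $\alpha^*h(A)$ in $h(S)$---a motive with vanishing $H^{2,0}$---has trivial $A^2$, since cohomologically all of $H^0(S,\Omega^2_S)$ comes from $A$ but Chow-theoretically the Albanese kernel of $S$ is a priori much larger than that of $A$. Establishing the upgrade from cohomology to rational equivalence is the technical crux: it will require careful use of Kimura's finite-dimensionality applied to $h(A)$, combined with an argument (in the spirit of Vial's refined decompositions) identifying the complementary motive as a sum of Lefschetz pieces, plus a dedicated treatment of the degenerate Albanese case.
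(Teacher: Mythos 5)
Your overall strategy---identify $t_2(S)$ with the transcendental motive of an abelian surface via the Albanese map and finite--dimensionality, then invoke the known case of abelian surfaces---is the same as the paper's, but as written the proposal has three genuine gaps. The most serious is the treatment of the abelian surface case: it is \emph{not} true that Kimura finite--dimensionality of $h(A)$ together with $\wedge^2H^{2,0}(A)=0$ formally forces $A^4(\wedge^2 t_2(A))=0$. The nilpotence theorem only kills motives whose \emph{entire} cohomology vanishes, whereas $\wedge^2 H^2_{tr}(A)$ is nonzero (it has $(3,1)$, $(2,2)$, $(1,3)$ parts); to run a Bloch--Srinivas style argument one would additionally need to know that $\wedge^2 H^2_{tr}(A)\subset H^4(A\times A)$ is supported on a divisor, which is an instance of the generalized Hodge conjecture essentially equivalent to the statement being proved (indeed, in the paper this support statement is deduced as corollary \ref{GH} \emph{from} the main theorem, not the other way around). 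The correct input, which the paper uses, is Voisin's explicit computation \cite[Example 4.40]{Vo} via the Beauville decomposition: $a\times a'=(-1)^g\,a'\times a$ for $a,a'\in A^g_{(g)}(A)_{\QQ}$. This is a concrete argument with multiplication maps on the abelian variety, not a formal consequence of finite--dimensionality.

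Second, your ``technical crux''---upgrading $t_2(S)\cong t_2(A)$ from cohomology to rational equivalence---cannot be carried out with finite--dimensionality of $h(A)$ alone: splitting $h(S)=h(A)\oplus N$ via the projector $\tfrac{1}{\deg\alpha}\,{}^t\Gamma_\alpha\circ\Gamma_\alpha$ leaves you needing $t_2(N)=0$ in $\MM_{\rm rat}$ knowing only that its cohomology vanishes, and for that you need $h(S)$ itself to be finite--dimensional. This is exactly where the classification input enters, which your proposal omits: by Ram\'on Mar\'\i\ \cite[Proposition 4.1]{RM}, a minimal non-abelian surface with $p_g=1$, $q=2$ is isogenous to a product of curves, hence has finite--dimensional motive. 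Third, the degenerate case where the Albanese image is a curve is left as a placeholder; the paper handles it with \cite[Proposition 4.4 and Theorem 4.10]{RM}, which show $S=(C\times E)/G$ is generalized hyperelliptic and exhibit an isomorphism $t_2(S)\cong t_2(A)$ in $\MM_{\rm hom}$ with $A$ an abelian surface (again upgraded using finite--dimensionality). A minor further point: the statement concerns integral Chow groups, so one should first invoke Rojtman's theorem to reduce to $\QQ$--coefficients.
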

  
  This result is proven by reduction to the case of abelian surfaces, where the conjecture is known to hold by work of Voisin \cite{Vo}. This reduction step can be made thanks to the theory of finite--dimensional motives developed by Kimura and O'Sullivan \cite{Kim}, \cite{An}. As a corollary to theorem \ref{main}, a certain instance of the generalized Hodge conjecture is verified on $S\times S$ (corollary \ref{GH}).
  
  The last section of this note is about decomposability of zero--cycles on surfaces with $p_g=1$ and $q=2$. Here, we raise some questions which we hope might spawn further research (conjecture \ref{nondecomp} and remark \ref{motiv}).

\vskip0.6cm

\begin{convention} In this note, the word {\sl variety\/} will refer to a reduced irreducible scheme of finite type over $\C$. The word {\sl surface\/} will refer to a smooth projective variety of dimension $2$. As customary, we will write $p_g(S):=\dim H^0(S,\Omega_S^2)$ for the geometric genus, and $q(S):=\dim H^0(S,\Omega^1_S)$ for the irregularity of a surface.

For a variety $X$, we will denote by $A_jX$ the Chow group of $j$--dimensional cycles on $X$; 
for $X$ smooth of dimension $n$ the notations $A_jX$ and $A^{n-j}X$ will be used interchangeably. 

The notations 
$A^j_{hom}(X)$ and $A^j_{AJ}(X)$ will be used to indicate the subgroups of 
homologically, resp. Abel--Jacobi trivial cycles.
The (contravariant) category of Chow motives (i.e., pure motives with respect to rational equivalence as in \cite{Sc}, \cite{MNP}) will be denoted $\MM_{\rm rat}$. The category of pure motives with respect to homological equivalence will be denoted $\MM_{\rm hom}$.
\end{convention}

\section{Preliminaries}

\subsection{Finite--dimensional motives}

We refer to \cite{Kim}, \cite{An}, \cite{Iv}, \cite{J4}, \cite{MNP} for the definition of finite--dimensional motive. 
An essential property of varieties with finite--dimensional motive is embodied by the nilpotence theorem:

\begin{theorem}[Kimura \cite{Kim}]\label{nilp} Let $X$ be a smooth projective variety of dimension $n$ with finite--dimensional motive. Let $\Gamma\in A^n(X\times X)_{\QQ}$ be a correspondence which is numerically trivial. Then there is $N\in\NN$ such that
     \[ \Gamma^{\circ N}=0\ \ \ \ \in A^n(X\times X)_{\QQ}\ .\]
\end{theorem}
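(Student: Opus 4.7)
The plan is to prove nilpotence by exploiting the defining structural property of Kimura finite-dimensionality, namely the decomposition of the motive into evenly and oddly finite-dimensional parts, and then to apply a motivic Cayley--Hamilton argument on each part. Throughout, I view $\Gamma$ as an endomorphism $f$ of the Chow motive $h(X)$ in $\MM_{\rm rat}$; the claim $\Gamma^{\circ N}=0$ is then the statement that $f$ is nilpotent.

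First I would set up the reduction. By hypothesis, $h(X)$ is finite-dimensional, so it decomposes as $h(X) = M^+ \oplus M^-$ with $M^+$ evenly finite-dimensional (some $\wedge^{p+1} M^+ = 0$) and $M^-$ oddly finite-dimensional (some $\mathrm{Sym}^{q+1} M^- = 0$). Relative to this decomposition, $f$ has block form with four components, each of which is again numerically trivial (since numerical triviality is preserved by composition with the projectors onto $M^\pm$). A standard lemma of Kimura shows that morphisms between $M^+$ and $M^-$ generate a bounded-nilpotent ideal: any composition alternating between $M^+$ and $M^-$ involves at some point a morphism factoring through $M^+ \otimes M^-$, which can be bounded using the vanishing of $\wedge^{p+1}M^+$ and $\mathrm{Sym}^{q+1}M^-$. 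Hence it suffices to prove: \emph{if $M$ is evenly (resp.\ oddly) finite-dimensional and $f\in \mathrm{End}(M)$ is numerically trivial, then $f^{\circ N}=0$ for some $N$.}

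Second, and at the heart of the argument, I would run a motivic Cayley--Hamilton computation in the evenly finite-dimensional case (the oddly finite-dimensional case being symmetric, with $\mathrm{Sym}$ replacing $\wedge$ throughout). Since $\wedge^{p+1} M = 0$, the antisymmetrizer in $\QQ[S_{p+1}]$ acts as zero on $M^{\otimes (p+1)}$. Rewriting this vanishing by expanding ``$\wedge^{p+1}(f+\lambda\cdot\mathrm{id})=0$'' formally and collecting coefficients of $\lambda^k$, one obtains a Cayley--Hamilton identity in $\mathrm{End}(M)$ of the shape
\[
f^{\circ (p+1)} \;=\; \sum_{k=1}^{p+1} c_k(f)\cdot f^{\circ (p+1-k)},
\]
where each $c_k(f)\in \QQ$ is an elementary symmetric function of the ``motivic eigenvalues'' of $f$, expressible through Newton's identities as a polynomial in the power sums $\mathrm{tr}(f^{\circ i})$ for $i\le k$. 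Thus, if every $\mathrm{tr}(f^{\circ i})$ vanishes, then every $c_k(f)$ vanishes, and the identity forces $f^{\circ (p+1)} = 0$.

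Third, I would verify the trace vanishing. For any correspondence $g\in A^n(X\times X)_\QQ$, one has the basic identity $\mathrm{tr}(f\circ g)=\Gamma \cdot g^{t}$ as rational numbers on $X\times X$. Applied iteratively, $\mathrm{tr}(f^{\circ i}) = \Gamma\cdot(\Gamma^{\circ(i-1)})^{t}$ is an intersection number pairing $\Gamma$ against some cycle, hence vanishes whenever $\Gamma$ is numerically trivial. This furnishes the input needed by the Cayley--Hamilton step and completes the proof. The main obstacle, I expect, is justifying the motivic Cayley--Hamilton identity rigorously: the combinatorial relation produced by antisymmetrization on $M^{\otimes (p+1)}$ must be pushed back to an identity of endomorphisms of $M$ itself, which requires careful bookkeeping with the $S_{p+1}$-action, the idempotent decomposition of $\QQ[S_{p+1}]$ into Young symmetrizers, and the observation that the relation coming from the vanishing antisymmetrizer involves traces (partial contractions) of $f$ on the tensor factors being removed, exactly as in Newton's identities.
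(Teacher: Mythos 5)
The paper does not prove this statement at all --- it is quoted directly from Kimura \cite{Kim} (see also \cite{An}, \cite{J4}) --- and your sketch is, in outline, exactly Kimura's original argument: split $h(X)$ into its evenly and oddly finite--dimensional summands, kill the diagonal blocks by the Cayley--Hamilton/Newton identity extracted from the vanishing of $\wedge^{p+1}M^+$ (resp.\ $\mathrm{Sym}^{q+1}M^-$) together with the vanishing of the traces $\mathrm{tr}(f^{\circ i})$ as intersection numbers against a numerically trivial correspondence, and control words containing off--diagonal blocks by the tensor--nilpotence of morphisms between even and odd objects. The one place your sketch is loosest is that last step (the phrase ``factoring through $M^+\otimes M^-$'' should really be the statement that any $\phi\in\mathrm{Hom}(M^+,M^-)$ has $\phi^{\otimes m}=0$ for $m\gg0$, since the $S_m$--equivariance of $\phi^{\otimes m}$ forces it through Schur functors $S_\lambda$ that vanish on one side or the other), but this is precisely Kimura's Proposition~6.1 and the surrounding combinatorics, so the proposal is correct as a reconstruction of the cited proof.
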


 Actually, the nilpotence property (for all powers of $X$) could serve as an alternative definition of finite--dimensional motive, as shown by a result of Jannsen \cite[Corollary 3.9]{J4}.
Conjecturally, any variety has finite--dimensional motive \cite{Kim}. We are still far from knowing this, but at least there are quite a few non--trivial examples:
 
\begin{remark} 
The following varieties have finite--dimensional motive: abelian varieties, varieties dominated by products of curves \cite{Kim}, $K3$ surfaces with Picard number $19$ or $20$ \cite{P}, surfaces not of general type with vanishing geometric genus \cite[Theorem 2.11]{GP}, Godeaux surfaces \cite{GP}, certain surfaces of general type with $p_g=0$ \cite{V8}, \cite{BF},\cite{PW}, Hilbert schemes of surfaces known to have finite--dimensional motive \cite{CM}, generalized Kummer varieties \cite[Remark 2.9(\rom2)]{Xu},
 3--folds with nef tangent bundle \cite{Iy} (an alternative proof is given in \cite[Example 3.16]{V3}), 4--folds with nef tangent bundle \cite{Iy2}, log--homogeneous varieties in the sense of \cite{Br} (this follows from \cite[Theorem 4.4]{Iy2}), certain 3--folds of general type \cite[Section 8]{V5}, varieties of dimension $\le 3$ rationally dominated by products of curves \cite[Example 3.15]{V3}, varieties $X$ with $A^i_{AJ}(X)_{}=0$ for all $i$ \cite[Theorem 4]{V2}, products of varieties with finite--dimensional motive \cite{Kim}.
\end{remark}

\begin{remark}
It is a (somewhat embarrassing) fact that all examples known so far of finite-dimensional motives happen to be in the tensor subcategory generated by Chow motives of curves (i.e., they are ``motives of abelian type'' in the sense of \cite{V3}). 
That is, the finite--dimensionality conjecture is still unknown for any motive {\em not\/} generated by curves (on the other hand, there exist many motives not generated by curves, cf. \cite[7.6]{D}).
\end{remark}

\subsection{The transcendental motive}

\begin{theorem}[Kahn--Murre--Pedrini \cite{KMP}]\label{t2} Let $S$ be a surface. There exists a decomposition
  \[ h_2(S)= t_2(S)\oplus h_2^{alg}(S)\ \in \MM_{\rm rat}\ ,\]
  such that
  \[  H^\ast(t_2(S),\QQ)= H^2_{tr}(S)\ ,\ \ H^\ast(h_2^{alg}(S),\QQ)=NS(S)_{\QQ}\ \]
  (here $H^2_{tr}(S)$ is defined as the orthogonal complement of the N\'eron--severi group $NS(S)_{\QQ}$ in $H^2(S,\QQ)$),
  and
   \[ A^\ast(t_2(S))_{\QQ}=A^2_{AJ}(S)_{\QQ}\ .\]
   (The motive $t_2(S)$ is called the {\em transcendental motive\/}.)
   \end{theorem}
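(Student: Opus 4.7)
The plan is to build the decomposition from the Chow--Künneth decomposition $h(S)=\bigoplus_{i=0}^{4} h_i(S)$ of a surface constructed by Murre: using a point $P$ and the Albanese, one produces pairwise orthogonal idempotents $\pi_0,\pi_1,\pi_3,\pi_4\in A^2(S\times S)_{\QQ}$ with the correct cohomological realizations, and sets $\pi_2:=\Delta_S-(\pi_0+\pi_1+\pi_3+\pi_4)$. The motive $h_2(S)=(S,\pi_2,0)$ then realizes $H^2(S,\QQ)$ and satisfies $A^{\ast}(h_2(S))_{\QQ}=A^2_{AJ}(S)_{\QQ}$. I would take this as the starting point and focus on splitting $\pi_2$ further into an algebraic and a transcendental projector.

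Next I would construct $\pi_2^{alg}$ explicitly. Choose a $\QQ$--basis $D_1,\dots,D_\rho$ of $NS(S)_{\QQ}$; by the Hodge index theorem the intersection matrix $M=(D_i\cdot D_j)$ is non--degenerate, so I can define
\[
\pi_2^{alg}:=\sum_{i,j} (M^{-1})_{ij}\, D_i\times D_j\ \in A^2(S\times S)_{\QQ}.
\]
A direct computation using $(D_i\times D_j)\circ(D_k\times D_\ell)=(D_j\cdot D_k)(D_i\times D_\ell)$ shows $\pi_2^{alg}$ is an idempotent, and that its cohomological action is the orthogonal projector (with respect to the intersection form) of $H^2(S,\QQ)$ onto the image of $NS(S)_{\QQ}$. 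I would then check the compatibility with Murre's projectors: on cohomology $\pi_2^{alg}$ evidently commutes with $\pi_2$ and is killed by $\pi_0,\pi_1,\pi_3,\pi_4$; because the motives in question are effective summands of $h(S)$ one can in fact adjust $\pi_2^{alg}$ inside its rational equivalence class (replacing it by $\pi_2\circ\pi_2^{alg}\circ\pi_2$, which has the same cohomology class) to obtain genuine orthogonality in $A^2(S\times S)_{\QQ}$. Setting $\pi_2^{tr}:=\pi_2-\pi_2^{alg}$ then gives two orthogonal idempotents summing to $\pi_2$.

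Write $t_2(S):=(S,\pi_2^{tr},0)$ and $h_2^{alg}(S):=(S,\pi_2^{alg},0)$. The cohomological statements are immediate from the construction of $\pi_2^{alg}$: its image is $NS(S)_{\QQ}\subset H^2(S,\QQ)$, hence $t_2(S)$ realizes the orthogonal complement, which is $H^2_{tr}(S)$ by definition. For the Chow groups, note $\pi_2^{alg}$ factors through a sum of Tate twists $\LL$, so $A^{\ast}(h_2^{alg}(S))_{\QQ}$ lies in the subgroup of $A^{\ast}(S)_{\QQ}$ spanned by the classes of the $D_i$; this is annihilated by the Abel--Jacobi/Albanese projector. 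Combining with the fact that $A^{\ast}(h_2(S))_{\QQ}=A^2_{AJ}(S)_{\QQ}$, orthogonality of $\pi_2^{alg}$ and $\pi_2^{tr}$ yields $A^{\ast}(t_2(S))_{\QQ}=A^2_{AJ}(S)_{\QQ}$.

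The main obstacle is the passage from cohomological orthogonality to orthogonality in $A^2(S\times S)_{\QQ}$: a priori $\pi_2^{alg}$ is only the ``right'' projector modulo homological equivalence, and one has to upgrade this to rational equivalence. The trick is the conjugation $\pi_2^{alg}\leadsto \pi_2\circ\pi_2^{alg}\circ\pi_2$, together with the fact that the image $(\pi_2^{alg})_\ast A^2(S)_{\QQ}$ lies in the span of algebraic classes where intersection pairing can be computed cycle--theoretically; once the relations $\pi_2^{alg}\circ\pi_2^{alg}=\pi_2^{alg}$ and $\pi_2\circ\pi_2^{alg}=\pi_2^{alg}=\pi_2^{alg}\circ\pi_2$ are verified at the level of Chow groups (using the explicit product formula above and the non--degeneracy of $M$), the decomposition and all stated identifications follow.
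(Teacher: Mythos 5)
The paper gives no proof of this statement: it is quoted verbatim from Kahn--Murre--Pedrini \cite{KMP}, so the only thing to compare your argument against is that reference. Your construction --- Murre's Chow--K\"unneth projectors for a surface, the explicit algebraic projector $\pi_2^{alg}=\sum_{i,j}(M^{-1})_{ij}\,D_i\times D_j$ built from a basis of $NS(S)_{\QQ}$ and the inverse intersection matrix, and $\pi_2^{tr}:=\pi_2-\pi_2^{alg}$ --- is precisely the one in \cite{KMP} (see also \cite{MNP}), and the one genuinely delicate step you flag, upgrading the cohomological orthogonality of $\pi_2^{alg}$ against the other projectors to orthogonality in $A^2(S\times S)_{\QQ}$, is resolved there essentially as you indicate, since all the relevant compositions with correspondences of the form $D_i\times D_j$ reduce to intersection numbers of divisors and can therefore be evaluated at the level of Chow groups. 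So the proposal is correct and follows the same route as the cited source.
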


\subsection{Surfaces isogenous to a product}

\begin{definition}[] Let $S$ be a smooth projective surface. We say that $S$ is {\em isogenous to a product\/} if $S$ admits an unramified finite covering which is biholomorphic to a product of two curves, both of genus at least $1$.
\end{definition}

\begin{remark} Surfaces isogenous to a product have been intensively studied by Catanese and his school, spawning a wealth of interesting concrete examples, cf. \cite{Cat}, \cite{BCP} and the (many) references given there.
\end{remark}

\begin{definition}[\cite{Cat}] Let $S$ be a smooth projective surface. We say that $S$ is {\em generalized hyperelliptic\/} if $S$ can be written
  \[ S=(C\times F)/G\ ,\]
  where $C,F$ are curves of genus at least $1$, and $G$ is a finite subgroup of the product of automorphism groups of $C$ and $F$, and $C\to C/G$ is unramified and $F/G\cong\PP^1$.
\end{definition}

\begin{remark} Generalized hyperelliptic surfaces are studied in \cite{Cat} and \cite{Zuc}.
\end{remark}

\section{Main theorem}

\begin{theorem}\label{main} Let $S$ be a surface with $p_g=1$ and $q=2$. Let $a,a^\prime\in A^2_{AJ}(S)$. Then
  \[ a\times a^\prime=a^\prime\times a\ \ \ \hbox{in}\ A^4(S\times S)\ .\]
  (Here $a\times a^\prime$ is a short--hand for the cycle class $(p_1)^\ast(a)\cdot(p_2)^\ast(a^\prime)\in A^4(S\times S)$, where $p_1, p_2$ denote projection on the first, resp. second factor.)
  \end{theorem}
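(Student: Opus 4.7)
The plan is to reduce the theorem to the case of an abelian surface, where the desired commutativity is due to Voisin \cite{Vo}. Set $A := \alb(S)$; this is an abelian surface since $q(S) = 2$, and let $\alpha : S \to A$ denote the Albanese map. Observe that $\chi(\OO_S) = 1 - q(S) + p_g(S) = 0$, so by the Enriques--Kodaira classification $S$ has Kodaira dimension at most $1$: either $S$ is birational to an abelian surface (and we are done), or $S$ is properly elliptic with $p_g = 1$ and $q = 2$. In the latter case the motive $h(S)$ is still finite-dimensional, since such surfaces are rationally dominated by a product of curves. The heart of the reduction is an isomorphism of Chow motives $t_2(S) \cong t_2(A)$ in $\MM_{\rm rat}$, after which Voisin's commutativity on $t_2(A)\otimes t_2(A)$ transfers directly to $t_2(S)\otimes t_2(S)$ and gives the stated cycle equality.

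First I would prove the cohomological counterpart: $\alpha^* : H^2_{tr}(A) \to H^2_{tr}(S)$ is an isomorphism of $\QQ$-Hodge structures. Treat first the case where $\alpha$ is surjective, equivalently where the wedge map $\Lambda^2 H^{1,0}(S) \to H^{2,0}(S)$ is nonzero. The Albanese property identifies $\alpha^* : H^1(A) \xrightarrow{\sim} H^1(S)$, so $\alpha^* H^2(A) = \Lambda^2 \alpha^* H^1(S) \subset H^2(S)$ is a sub-Hodge-structure of dimension $6$, and because $p_g(S) = p_g(A) = 1$ the injective map $\alpha^*$ is bijective on $H^{2,0}$. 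Hence the orthogonal complement of $\alpha^* H^2(A)$ in $H^2(S)$ is a sub-Hodge-structure with trivial $(2,0)$ and $(0,2)$ parts, thus of pure type $(1,1)$, thus algebraic by the Lefschetz $(1,1)$-theorem. This forces $H^2_{tr}(S) \subset \alpha^* H^2(A)$; combined with the automatic inclusion $\alpha^* H^2_{tr}(A) \subset H^2_{tr}(S)$ (a consequence of $\alpha_* NS(S) \subset NS(A)$) we conclude the Hodge isomorphism.

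Next I would lift this to an isomorphism of Chow motives. Using the graph $\Gamma_\alpha \in A^2(S \times A)$ and its transpose, composed with the Kahn--Murre--Pedrini projectors of theorem \ref{t2}, one gets morphisms $t_2(A) \to t_2(S)$ and $t_2(S) \to t_2(A)$ in $\MM_{\rm rat}$ which by the previous step induce mutually inverse isomorphisms on cohomology. Their compositions differ from the identity projectors on $t_2(A)$ and $t_2(S)$ by homologically trivial correspondences in $A^2(S\times S)$ and $A^2(A\times A)$; since $A$ has finite-dimensional motive (abelian variety) and so does $S$ (from the classification above), theorem \ref{nilp} forces these differences to be nilpotent. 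A standard idempotent-lifting argument then upgrades the cohomological isomorphism to a genuine iso $t_2(S) \cong t_2(A)$ in $\MM_{\rm rat}$, and Voisin's theorem applied to $A$ completes the argument.

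The hardest part will be the subcase where the Albanese image $\alpha(S)$ is a curve rather than all of $A$; this occurs exactly when the wedge map $\Lambda^2 H^{1,0}(S) \to H^{2,0}(S)$ vanishes, and then $\alpha^*$ is zero on $H^2$, so $t_2(S)$ cannot be pulled back from $t_2(A)$ via $\alpha$. In this subcase one has a fibration $f : S \to B$ over a genus $2$ curve $B$ (with $A$ the Jacobian of $B$), and one must construct instead a different abelian surface $A'$ --- arising naturally from the relative Jacobian of $f$, or from an isogeny decomposition dictated by the fibration --- together with a correspondence $\Gamma \in A^2(S \times A')$ realising $t_2(S)$ as a direct summand of $t_2(A')$. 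Voisin's theorem then transfers from $A'$ instead of from $A$. Constructing this compensating correspondence, and verifying the cohomological comparison in the fibred case, is the delicate point I expect to require the most work.
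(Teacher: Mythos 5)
Your overall strategy is exactly the paper's: reduce to an abelian surface via an isomorphism of transcendental motives $t_2(S)\cong t_2(A)$, upgrade from homological to rational equivalence by nilpotence/finite--dimensionality, and quote Voisin's result \cite{Vo} for abelian surfaces. Your Hodge--theoretic argument in the surjective--Albanese case is sound and essentially reproves what the paper imports from Ram\'on Mar\'\i\ \cite[Proposition 4.4]{RM}. But there are two genuine gaps, both at the points where the paper leans on the structural results of \cite{RM}. First, you assert that a properly elliptic surface with $p_g=1$, $q=2$ is ``rationally dominated by a product of curves'' and hence has finite--dimensional motive. This is not a formal consequence of the classification: finite--dimensionality is open for general elliptic surfaces, and the needed fact here is the non--trivial theorem that a minimal non--abelian surface with $p_g=1$, $q=2$ is \emph{isogenous to a product} (\cite[Proposition 4.1]{RM}). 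Without that input your nilpotence step on the $S$ side has no foundation, and it is precisely this step that makes the whole idempotent--lifting argument work.

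Second, and more seriously, the case where the Albanese image is a curve --- which you correctly identify as the hard case --- is not actually proved: you describe what would be needed (an abelian surface $A'$ built from the fibration and a correspondence realising $t_2(S)$ inside $t_2(A')$) but do not construct it. This is the content of \cite[Theorem 4.10]{RM} (in that case $S=(C\times E)/G$ is generalized hyperelliptic and $t_2(S)\cong t_2(A)$ in $\MM_{\rm hom}$ for an explicit abelian surface $A$), and it is the crux of the theorem; a proof that defers it is incomplete. Two smaller points: the theorem is stated integrally, so you need Rojtman's theorem \cite{Ro} to reduce to $\QQ$--coefficients; and transferring Voisin's identity from $A$ back to $S$ requires a little care (the paper uses the commutative square relating $\nu$ on $S$ and on $A$ through $(\Gamma\times\Gamma)_\ast$ and the split injectivity of that map on $\ima\nu$), which your phrase ``transfers directly'' glosses over, though this part is routine once the motive isomorphism is in hand.
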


\begin{proof} First, Rojtman's theorem \cite{Ro} says there is no torsion in $A^4_{AJ}(S\times S)$, and so it suffices to prove the theorem for Chow groups with $\QQ$--coefficients.
Next, since $p_g$ and $q$ and $A^2_{AJ}$ are birational invariants (of smooth surfaces), we may suppose $S$ is minimal. 
We now use the following structural results:

\begin{proposition}[Ram\'on Mar\'\i \cite{RM}]\label{RM1} Let $S$ be a minimal surface with $p_g(S)=1$ and $q=2$, and suppose $S$ is not abelian. Then $S$ is isogenous to a 
product (in particular, $S$ has finite--dimensional motive).

  \end{proposition}
  
  \begin{proof} \cite[Proposition 4.1]{RM}.
  \end{proof}
  
  \begin{proposition}[Ram\'on Mar\'\i \cite{RM}]\label{RM2} Let $S$ be as in proposition \ref{RM1}. One of the following cases occurs:
  
  \noindent
  (\rom1) The Albanese map induces an isomorphism of motives
    \[ h(S)\cong h(\alb(S))\ \ \ \hbox{in}\ \MM_{\rm rat}\ .\]
    
  \noindent
  (\rom2) The Albanese map sends $S$ to a curve, $S$ is generalized hyperelliptic, and there exists an isomorphism of motives  
     \[ t_2(S)\cong t_2(A)\ \ \ \hbox{in}\ \MM_{\rm rat}\ ,\]
     where $A$ is an abelian surface.
  \end{proposition}
  
\begin{proof} \cite[Proposition 4.4]{RM} proves that if the Albanese map is surjective, then it induces an isomorphism of homological motives
  \[ h(S)\cong h(\alb(S))\ \ \ \hbox{in}\ \MM_{\rm hom}\ .\]
  Since both sides are finite--dimensional motives, this implies an isomorphism of Chow motives
  \[ h(S)\cong h(\alb(S))\ \ \ \hbox{in}\ \MM_{\rm rat}\ .\]
 
 Suppose now the Albanese map is not surjective. Then \cite[Proposition 4.4]{RM} implies that $S$ can be written
   \[ S=(C\times E)/G\ ,\]
   where $C$ is a curve and $E$ is an elliptic curve and $E/G\cong\PP^1$. It is also shown in loc. cit. that $C/G$ is the image of the Albanese map (which is thus a holomorphic fibre bundle with fibre $E$ and base $C/G$). It follows from \cite[Theorem F]{Cat} or, alternatively, from \cite[Proposition 2.3]{Zuc}, that $S$ is generalized hyperelliptic.
  Moreover, \cite[Theorem 4.10]{RM}
 proves the existence of an isomorphism of homological motives
 \[  t_2(S)\cong t_2(A)\ \ \ \hbox{in}\ \MM_{\rm hom}\ ,\]
     where $A$ is an abelian surface. Again, finite--dimensionality allows to upgrade to an isomorphism of Chow motives.
\end{proof}     
     
We can now wrap up the proof of theorem \ref{main}. It follows from proposition \ref{RM2} that there exists an abelian surface $A$ and a correspondence $\Gamma\in A^2(S\times A)_{\QQ}$ inducing an isomorphism
  \[ \Gamma_\ast\colon\ \ A^2_{AJ}(S)_{\QQ}\ \xrightarrow{\cong}\ A^2_{AJ}(A)_{\QQ}\ .\]
There is a commutative diagram
  \[ \begin{array}[c]{ccc}
          A^2_{AJ}(S)_{\QQ}\otimes A^2_{AJ}(S)_{\QQ} &\xrightarrow{\nu}& A^4(S\times S)_{\QQ}\\
           \downarrow{(\Gamma_\ast,\Gamma_\ast)} &&\downarrow{(\Gamma\times\Gamma)_\ast}\\
           A^2_{AJ}(A)_{\QQ}\otimes A^2_{AJ}(A)_{\QQ} &\xrightarrow{}& A^4(A\times A)_{\QQ}\\
           \end{array}\]     
The restriction 
  \[ (\Gamma\times\Gamma)_\ast\colon\ \ \ima\nu\ \to\ A^4(A\times A)_{\QQ} \]
  is a split injection (a left inverse is given by $(\Psi\times\Psi)_\ast$, where $\Psi\in A^2(A\times S)_{\QQ}$ is a correspondence inverse to $\Gamma$). It thus suffices to prove conjecture \ref{vois} is true for $A$, which is well--known:
  
\begin{proposition}[Voisin \cite{Vo}] Let $A$ be an abelian variety of dimension $g$. Let $a,a^\prime\in A^g_{(g)}(A)_{\QQ}$, where $A^\ast_{(\ast)}$ denotes the Beauville filtration \cite{Beau}. Then
  \[a\times a^\prime=(-1)^g a^\prime\times a\ \ \ \hbox{in}\ A^{2g}(A\times A)_{\QQ}\ .\]
  \end{proposition}
  
  This is \cite[Example 4.40]{Vo}. Noting that for an abelian surface $A$, we have an inclusion $A^2_{AJ}(A)_{\QQ}\subset A^2_{(2)}(A)_{\QQ}$, this ends the proof.  
\end{proof}

As a corollary to theorem \ref{main}, a certain case of the generalized Hodge conjecture is verified:

\begin{corollary}\label{GH} Let $S$ be a surface with $p_g=1$ and $q=2$. Then the Hodge substructure 
   \[ \wedge^2:=\ima\Bigl(\wedge^2 H^2(S,\QQ)\ \to\ H^4(S\times S,\QQ)\Bigr)\]
 is supported on a divisor $D\subset S\times S$.
 \end{corollary}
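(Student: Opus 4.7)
The plan is to split the question via the orthogonal decomposition $H^2(S,\QQ) = NS(S)_\QQ \oplus H^2_{tr}(S)$, which induces
\[ \wedge^2 H^2(S,\QQ) = \wedge^2 NS(S)_\QQ \;\oplus\; \bigl(NS(S)_\QQ \otimes H^2_{tr}(S)\bigr) \;\oplus\; \wedge^2 H^2_{tr}(S). \]
The first two summands pose no difficulty: picking divisors $D_1,\ldots,D_r\subset S$ whose classes span $NS(S)_\QQ$ and setting $D_0 = \bigcup_i D_i$, any class in either summand lies in the image of the Gysin map from $H^\ast\bigl((\widetilde{D_0}\times S)\sqcup (S\times \widetilde{D_0})\bigr)$ and is hence supported on the divisor $(D_0\times S)\cup(S\times D_0) \subset S\times S$. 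So the problem reduces to showing that the transcendental summand $\wedge^2 H^2_{tr}(S)$ is supported on a divisor of $S\times S$.

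For this, let $p_{tr}\in A^2(S\times S)_\QQ$ be the projector cutting out the transcendental motive $t_2(S)$ of Theorem \ref{t2}, and let $\Gamma_\sigma$ denote the graph of the factor swap $\sigma\colon S\times S\to S\times S$, $(x,y)\mapsto(y,x)$. The sub-motive $\wedge^2 t_2(S)\subset h(S\times S)$ is cut out by the idempotent $\Pi = \tfrac{1}{2}(p_{tr}\otimes p_{tr})\circ(\Delta_{S\times S}-\Gamma_\sigma)$, and its Betti realization is exactly $\wedge^2 H^2_{tr}(S)\subset H^4(S\times S,\QQ)$. Theorem \ref{main} precisely says that $(\Delta_{S\times S}-\Gamma_\sigma)_\ast$ kills every external product $a\times a'$ with $a,a'\in A^2_{AJ}(S)_\QQ = A^\ast(t_2(S))_\QQ$; combined with the finite-dimensionality of $h(S)$ furnished by Proposition \ref{RM1} (together with the abelian surface case), this should yield the vanishing of $A^4(\wedge^2 t_2(S))_\QQ$. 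A Bloch--Srinivas-style decomposition of the diagonal of the motive $\wedge^2 t_2(S)$, pushed through Kimura's nilpotence Theorem \ref{nilp}, then forces $\wedge^2 t_2(S)$ to be supported on a divisor $D\subset S\times S$, whence $\wedge^2 H^2_{tr}(S)$ lies in the image of $H^2(\widetilde{D},\QQ)(-1)\to H^4(S\times S,\QQ)$, as required.

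The main obstacle is the passage from Theorem \ref{main}, which is a vanishing on individual external products, to the vanishing of the whole group $A^4(\wedge^2 t_2(S))_\QQ$, since a priori this group may contain classes not arising from external products. The standard remedy is a spread argument: parametrize pairs of Abel--Jacobi trivial zero-cycles by a smooth base $T$ (for instance, a sufficiently high symmetric power of $S$), lift the symmetry relation $a\times a' = a'\times a$ to a relative identity over $T$, and descend it using Bloch--Srinivas in the finite-dimensional setting. Alternatively, one could exploit Proposition \ref{RM2}: since $t_2(S)\cong t_2(A)$ as Chow motives for an abelian surface $A$, one has $\wedge^2 t_2(S)\cong\wedge^2 t_2(A)$ as a summand of $h(A\times A)$, reducing the question to the analogous statement on $A\times A$, where correspondences on abelian varieties (in the spirit of \cite{Vo}) afford a more direct verification.
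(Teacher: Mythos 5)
Your overall route is the paper's: split off the algebraic part of $H^2(S,\QQ)$ (which is trivially supported on a divisor), reduce to $\wedge^2 H^2_{tr}(S)$, kill the relevant projector on zero--cycles using Theorem \ref{main}, and conclude by Bloch--Srinivas. The one place where you stall --- the ``main obstacle'' of passing from the vanishing on individual external products to the vanishing of $\Pi_\ast A^4(S\times S)_{\QQ}$ --- is not actually an obstacle, and neither of your proposed remedies is needed. The group $A_0(S\times S)_{\QQ}$ is generated by classes of closed points $(x,y)$, i.e.\ by external products $[x]\times[y]$, and
\[ (p_{tr}\times p_{tr})_\ast\bigl([x]\times[y]\bigr)=(p_{tr})_\ast[x]\times (p_{tr})_\ast[y]\ ,\]
which is an external product of two elements of $A^2_{AJ}(S)_{\QQ}$. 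Hence every element of $\Pi_\ast A_0(S\times S)_{\QQ}$ is a linear combination of antisymmetrized external products $a\times a'-a'\times a$ with $a,a'\in A^2_{AJ}(S)_{\QQ}$, and these vanish by Theorem \ref{main}. So $\Pi_\ast A_0(S\times S)_{\QQ}=0$ on the nose; your worry would be legitimate for cycles of positive dimension on $S\times S$, but not for zero--cycles. (This is exactly how the paper argues: its $p^{tr}=(\Delta_{S\times S}-\Gamma_\iota)\circ(\pi_2^{tr}\times\pi_2^{tr})$ annihilates $A_0(S\times S)_{\QQ}$ for the same reason.)

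Two further points. First, once $\Pi_\ast A_0(S\times S)_{\QQ}=0$ is known, the plain Bloch--Srinivas decomposition applied to the correspondence $\Pi\in A^4(S^4)_{\QQ}$ shows that $\Pi$ is rationally equivalent to a cycle supported on $S\times S\times D$ for some divisor $D\subset S\times S$; acting on $H^4(S\times S,\QQ)$ this immediately gives that $\wedge^2 H^2_{tr}(S)=\Pi_\ast H^4(S\times S,\QQ)$ is supported on $D$. Neither Kimura's nilpotence theorem \ref{nilp} nor the finite--dimensionality of $h(S)$ enters here --- you only need them if you insist on the stronger \emph{motivic} statement that $\wedge^2 t_2(S)$ itself is supported on a divisor, which the corollary does not require. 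Second, your alternative of transporting the problem to $\wedge^2 t_2(A)$ for an abelian surface $A$ via Proposition \ref{RM2} would work, but it is a detour: the isomorphism $t_2(S)\cong t_2(A)$ is already the substance of Theorem \ref{main}, so invoking it again buys nothing over applying Theorem \ref{main} directly as above.
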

 
 \begin{proof} As noted in \cite[Corollary 3.5.1]{V9}, this kind of result follows from the truth of conjecture \ref{vois} by applying the Bloch--Srinivas argument. The idea is to consider the correspondence
    \[ p:= (\Delta_{S\times S}-\Gamma_\iota)\circ(\pi_2\times\pi_2)\ \ \in A^4(S^4)_{\QQ} \]
    (where $\pi_2$ denotes the middle K\"unneth component of $S$, and $\Gamma_\iota$ denotes the graph of the involution $\iota$ of $S\times S$ exchanging the two factors). This correspondence $p$ is a projector on $\wedge^2$.
   For an appropriate choice of $\pi_2$, there is a decomposition
   \[ \pi_2=\pi_2^{tr}+\pi_2^{alg}\ \ \ \hbox{in}\ A^2(S\times S)_{\QQ}\ ,\]
   where $\pi_2^{tr}$ defines the transcendental motive $t_2(S)$ of \cite{MNP}. 
   This induces a decomposition of the correspondence $p$:
   \[ p= (\Delta_{S\times S}-\Gamma_\iota)\circ(\pi_2^{tr}\times\pi_2^{tr}+\pi_2^{tr}\times\pi_2^{alg}+\pi_2^{alg}\times\pi_2^{tr}+\pi_2^{alg}\times\pi_2^{alg})\ \ \in A^4(S^4)_{\QQ}\ . \]
   Clearly, the $3$ summands containing $\pi_2^{alg}$ project $H^4(S\times S)$ to something supported on a divisor (indeed, $\pi_2^{alg}\subset S\times S$ is supported on a 
   product of divisors, by construction \cite{MNP}). It only remains to analyze the first summand
   \[ p^{tr}:=(\Delta_{S\times S}-\Gamma_\iota)\circ(\pi_2^{tr}\times\pi_2^{tr})\ \ \ \in A^4(S^4)_{\QQ}\ \]
   (the correspondence $p^{tr}$ is a projector on $\wedge^2 H^2_{tr}(S)$).   
       
   It follows from theorem \ref{main} that   
      \[ (p^{tr})_\ast A_0(S\times S)_{\QQ}=0\ .\]
  Using the Bloch--Srinivas method \cite{BS}, this implies $p^{tr}$ is rationally equivalent to a cycle supported on $S\times S\times D$, where $D\subset S\times S$ is a divisor. It follows that
  \[   \wedge^2 H^2_{tr}(S)= (p^{tr})_\ast H^4(S\times S) \]
  is supported on $D$.
 \end{proof}

\section{Decomposable cycles}

Let $S$ be a surface. The study of the {\em decomposable cycles\/}, i.e. the image of the intersection map
  \[ A^1(S)_{\QQ}\otimes A^1(S)_{\QQ}\ \to\ A^2(S)_{\QQ} \]
  is an active research topic. For $K3$ surfaces, the group of decomposable cycles has dimension $1$ \cite{BV}.\footnote{There are related results in higher dimension: Let $X$ be a Calabi--Yau complete intersection, and $\dim X=n$. Then 
     \[ \ima\Bigl( A^j(X)_{\QQ}\otimes A^{n-j}(X)_{\QQ}\to A^n(X)_{\QQ}\Bigr) \]
  has dimension $1$, for any $0<j<n$ \cite{V13}, \cite{LFu}.}
  
 For high degree surfaces in $\PP^3$, the dimension of the group of decomposable cycles can become arbitrarily large \cite{OG}.
  For abelian surfaces, it is well--known that {\em all\/} $0$--cycles are decomposable \cite{Bl2}. The same holds for the Fano surface of lines on a cubic threefold \cite[Example 1.7]{B}.
  
  Surfaces with $p_g=1$ and $q=2$ split in two cases: those with surjective Albanese map, and those where the image of the Albanese map is a curve. The first case behaves like abelian varieties (proposition \ref{alb2}). In the second case, there are non--decomposable zero--cycles (proposition \ref{alb1}). Conjecturally, a stronger statement holds (conjecture \ref{nondecomp}).
  
  \begin{proposition}\label{alb2} Let $S$ be a surface with $p_g(S)=1$ and $q(S)=2$. Assume the image of the Albanese map has dimension $2$. Then the intersection map
  \[ A^1(S)_{\QQ}\otimes A^1(S)_{\QQ}\ \to\ A^2(S)_{\QQ} \]
  is surjective.
  \end{proposition}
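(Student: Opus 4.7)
The plan is to reduce the statement to the case of an abelian surface, where it is a classical theorem of Bloch \cite{Bl2}. Since the statement is a birational invariant for smooth projective surfaces (cf.\ the reduction step in the proof of theorem \ref{main}), we may assume $S$ is minimal. If $S$ is abelian the conclusion is immediate. Otherwise, proposition \ref{RM1} implies in particular that $S$ has finite-dimensional motive, and proposition \ref{RM2}(i)---applicable because the Albanese map is surjective onto $\alb(S)$ by hypothesis---provides an isomorphism of Chow motives $h(S)\cong h(A)$ in $\MM_{\rm rat}$ induced by the Albanese morphism $\alpha\colon S\to A:=\alb(S)$.

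Applying the Chow functor to this motive isomorphism yields an isomorphism of $\QQ$-vector spaces
  \[ \alpha^\ast\colon A^\ast(A)_\QQ\ \xrightarrow{\cong}\ A^\ast(S)_\QQ, \]
which, being pullback along a morphism of varieties, is simultaneously a ring homomorphism, and hence a graded ring isomorphism. The conclusion is then immediate from Bloch's theorem for abelian surfaces \cite{Bl2}: every class $a\in A^2(A)_\QQ$ can be written as $\sum_i c_i\,D_i\cdot E_i$ with $D_i,E_i\in A^1(A)_\QQ$, and applying the ring isomorphism $\alpha^\ast$ exhibits every element $\alpha^\ast(a)\in A^2(S)_\QQ$ in the decomposable form $\sum_i c_i\,\alpha^\ast(D_i)\cdot\alpha^\ast(E_i)$.

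The substantive input is the Chow-motive isomorphism in proposition \ref{RM2}(i); this is proved in \cite{RM} at the level of homological motives and then upgraded to rational equivalence via the finite-dimensionality of $h(S)$ and the nilpotence theorem \ref{nilp}. Granted that input, the transfer of Bloch's decomposability result along $\alpha^\ast$ is routine.
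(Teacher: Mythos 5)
Your proposal is correct and follows essentially the same route as the paper's own proof: invoke proposition \ref{RM2}(\rom1) in the surjective-Albanese case, use finite-dimensionality to upgrade to an isomorphism of Chow motives, deduce a ring isomorphism $\alpha^\ast\colon A^\ast(A)_{\QQ}\xrightarrow{\cong}A^\ast(S)_{\QQ}$, and transfer Bloch's decomposability theorem for abelian surfaces \cite{Bl2} along it. The one point where the paper is more careful than your ``applying the Chow functor'' sentence is in justifying that it is specifically the transpose ${}^t\Gamma_\alpha$ (the correspondence acting as $\alpha^\ast$) that is an isomorphism of motives: this is checked by noting that $\alpha$ is finite and surjective, so $\alpha_\ast\alpha^\ast$ is a multiple of the identity on cohomology, whence ${}^t\Gamma_\alpha$ is an isomorphism in $\MM_{\rm hom}$ and then, by finite-dimensionality again, in $\MM_{\rm rat}$.
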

  
  \begin{proof} We have seen (proposition \ref{RM2}) that in this case the Albanese map $\alpha$ induces an isomorphism of motives
    \[ \Gamma_\alpha\colon\ \ h(S)\ \xrightarrow{\cong}\ h(A)\ \ \ \hbox{in}\ \MM_{\rm hom}\ \]
    (where $A:=\hbox{Alb}(S)$).
    Since $\alpha$ is finite and surjective, the composition $\alpha_\ast\alpha^\ast\colon H^\ast(A)\to H^\ast(A)$ is a multiple of the identity. This implies $\alpha^\ast$ is an isomorphism on cohomology, and so there is an isomorphism of motives
    \[ {}^t \Gamma_\alpha\colon\ \ h(A)\ \xrightarrow{\cong}\ h(S)\ \ \ \hbox{in}\ \MM_{\rm hom}\ .\]
    By finite--dimensionality, we get an isomorphism of Chow motives
     \[ {}^t \Gamma_\alpha\colon\ \ h(A)\ \xrightarrow{\cong}\ h(S)\ \ \ \hbox{in}\ \MM_{\rm rat}\ .\]
  This implies an isomorphism of rings  
      \[ \alpha^\ast\colon\ \ A^\ast(A)_{\QQ}\ \xrightarrow{\cong}\ A^\ast(S)_{\QQ}\ .\]
    The proposition now follows, since zero--cycles on abelian varieties are decomposable \cite{Bl2}.
    \end{proof}

  \begin{proposition}\label{alb1} Let $S$ be a surface with $p_g(S)=1$ and $q(S)=2$. Assume the image of the Albanese map has dimension $<2$. Then the intersection map
  \[ A^1_{hom}(S)_{\QQ}\otimes A^1_{hom}(S)_{\QQ}\ \to\ A^2_{AJ}(S)_{\QQ} \]
  is not surjective.
    \end{proposition}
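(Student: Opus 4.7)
The approach leans on the structure theorem recalled in proposition \ref{RM2}(ii): the surface takes the form $S=(C\times E)/G$ with $C$ a curve on which $G$ acts freely, $E$ an elliptic curve with $E/G\cong\PP^1$, and $G$ acting diagonally. Freeness on $C$ forces freeness on $C\times E$, so the quotient map $\pi\colon C\times E\to S$ is finite \'etale of degree $|G|$. Consequently $\pi_\ast\pi^\ast$ acts as multiplication by $|G|$ on $A^\ast(S)_\QQ$, and $\pi^\ast\colon A^\ast(S)_\QQ\to A^\ast(C\times E)_\QQ$ is split injective. The plan is to use $\pi^\ast$ to transport the intersection problem to $C\times E$, where the Picard variety has a particularly clean description.

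The first substantive step is to pin down $\pi^\ast A^1_{hom}(S)_\QQ$ inside $A^1_{hom}(C\times E)_\QQ=\pic^0(C\times E)_\QQ$. By $G$-equivariance the image sits in the $G$-invariants, and via the two projections we have $\pic^0(C\times E)=J(C)\oplus E$, so the $G$-invariant part decomposes as $J(C)^G\oplus E^G$. The condition $E/G\cong\PP^1$ forces $G$ to act nontrivially on $E$ (otherwise $E/G=E$), and any element of $G$ acting nontrivially on $E$ has finite fixed locus; hence $E^G$ is a finite group, and $E^G\otimes\QQ=0$. The upshot is that every class in $\pi^\ast A^1_{hom}(S)_\QQ$ is of the form $p_1^\ast D$ for some $D\in J(C)^G_\QQ$: every pulled-back divisor class already comes from the curve $C$ alone.

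The conclusion then drops out for free. For any $L_1,L_2\in A^1_{hom}(S)_\QQ$, writing $\pi^\ast L_i=p_1^\ast D_i$, the intersection product satisfies
 \[ \pi^\ast(L_1\cdot L_2)=p_1^\ast D_1\cdot p_1^\ast D_2=p_1^\ast(D_1\cdot D_2)=0, \]
because $D_1\cdot D_2\in A^2(C)_\QQ=0$ vanishes for dimensional reasons on the curve $C$. Injectivity of $\pi^\ast$ then forces $L_1\cdot L_2=0$ in $A^2(S)_\QQ$, so the intersection map under consideration is identically the zero map. Since $A^2_{AJ}(S)_\QQ$ is nonzero---in fact infinite-dimensional by Mumford's theorem, as $p_g(S)=1$---the map is a fortiori not surjective. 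The argument is essentially bookkeeping once the \'etale cover is in play; the only point requiring a line of verification is the finiteness of $E^G$, immediate from $E/G\cong\PP^1$, and I do not foresee any substantive technical obstacle.
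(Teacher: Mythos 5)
Your argument is correct, but it is genuinely different from the one in the paper, and it proves strictly more. The paper's proof never touches the \'etale cover at the level of Chow groups: it shows that the cup product $H^1(S,\OO_S)\otimes H^1(S,\OO_S)\to H^2(S,\OO_S)$ vanishes (because $H^1(S,\QQ)=H^1(C,\QQ)^G$, so the map factors through $H^2(C/G,\OO_{C/G})=0$), and then invokes the Esnault--Srinivas--Viehweg theorem \cite{ESV} --- surjectivity of the Chow-theoretic intersection map would force surjectivity of this cup product, contradicting $p_g(S)=1$. Your route instead pulls everything back along the \'etale quotient map $\pi\colon C\times E\to S$ and observes that $\pi^\ast A^1_{hom}(S)_\QQ$ lies in $p_1^\ast \pic^0(C)_\QQ$, so that products of homologically trivial divisors die for dimension reasons on $C$; combined with the split injectivity of $\pi^\ast$ this shows the intersection map is \emph{identically zero}, not merely non-surjective. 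That is precisely the statement of conjecture \ref{nondecomp}, which the paper leaves open, so your argument deserves careful scrutiny --- but I do not see a gap. The one point you should state more carefully is the vanishing of the $E$-component: $G$ acts on $\pic^0(E)$ through the linear parts $\alpha_g$ of the automorphisms $x\mapsto\alpha_g(x)+a_g$ (translations act trivially on $\pic^0$), and the hypothesis $E/G\cong\PP^1$ guarantees some $\alpha_g\neq\mathrm{id}$, whose fixed locus on $\pic^0(E)$ is the finite group $\ker(\hat\alpha_g-1)$; your phrase ``any element of $G$ acting nontrivially on $E$ has finite fixed locus'' conflates the action on $E$ with the action on $\pic^0(E)$ (a nontrivial translation has \emph{empty} fixed locus on $E$ but acts trivially on $\pic^0(E)$). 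With that repaired, what your approach buys is an elementary, purely geometric proof of the stronger vanishing statement; what the paper's approach buys is a softer argument that does not require analyzing $\pic^0$ of the cover, at the cost of quoting \cite{ESV} and yielding only non-surjectivity.
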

  
  \begin{proof} In this case, $S$ is generalized hyperelliptic, more precisely $S$ is of the form
       \[ S=(C\times E)/G\ ,\]
      where $E$ is an elliptic curve and $E/G$ is a rational curve \cite[Proof of Proposition 4.4]{RM}. It follows that
        \[ H^1(S,\QQ)= H^1(C\times E,\QQ)^G= H^1(C,\QQ)^G\oplus H^1(E,\QQ)^G = H^1(C,\QQ)^G\ .\]   
  This implies that the cup product map
   \[  H^1(S,\OO_S)\otimes H^1(S,\OO_S)\ \to\ H^2(S,\OO_S)\ \]
   is zero (indeed, it factors over 
     \[ H^1(C/G,\OO_{C/G})\otimes H^1(C/G,\OO_{C/G})\ \to\ H^2(C/G,\OO_{C/G})\ ,\]
     which clearly must be zero.)  

         Suppose now the intersection map
     \[ A^1_{hom}(S)_{\QQ}\otimes A^1_{hom}(S)_{\QQ}\ \to\ A^2_{AJ}(S)_{\QQ} \]
     were surjective. Then it follows from \cite{ESV} that also     
     \[  H^1(S,\OO_S)\otimes H^1(S,\OO_S)\ \to\ H^2(S,\OO_S)\ \]     
     is surjective. This is a contradiction.
     \end{proof}

Conjecturally, something stronger is true:

\begin{conjecture}\label{nondecomp} Let $S$ be a surface with $p_g(S)=1$ and $q(S)=2$. Assume the image of the Albanese map has dimension $<2$. Then the intersection map
  \[ A^1_{hom}(S)_{\QQ}\otimes A^1_{hom}(S)_{\QQ}\ \to\ A^2_{AJ}(S)_{\QQ} \]
  is zero.
  \end{conjecture}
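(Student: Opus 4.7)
The plan is to reduce to the fact that a curve supports no $2$-cycles: under the hypothesis that the Albanese image of $S$ is at most $1$-dimensional, every element of $A^1_{hom}(S)_{\QQ}$ will be pulled back from a smooth curve, and so any two such classes will have vanishing intersection in $A^2(S)_{\QQ}$. In fact this argument should yield the (stronger) vanishing in $A^2(S)_{\QQ}$ itself, not merely in $A^2_{AJ}(S)_{\QQ}$; it is entirely geometric and uses none of the finite-dimensional-motive machinery.

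To set up the key factorization, let $M:=\ima(S\to\alb(S))$, a curve by hypothesis, and let $B$ denote its normalization. Since $S$ is smooth, the Albanese morphism factors through a morphism $S\xrightarrow{\varphi}B\xrightarrow{\bar\iota}\alb(S)$. In the setting of proposition \ref{RM2}(\rom2), one can take $B=C/G$, a smooth genus-$2$ curve, with $\varphi$ induced by the projection $C\times E\to C$ and $\bar\iota$ the Abel--Jacobi embedding of $B$ into its Jacobian. Next, I would argue that the pullback $\varphi^*\colon A^1_{hom}(B)_{\QQ}\to A^1_{hom}(S)_{\QQ}$ is surjective: by the universal property of the Albanese, the pullback $\pic^0(\alb(S))\to\pic^0(S)$ along the Albanese morphism is an isomorphism, and since it factors as $\varphi^*\circ\bar\iota^*$, the map $\varphi^*$ is already surjective onto $\pic^0(S)$, hence surjective on $A^1_{hom}(\cdot)_{\QQ}=\pic^0(\cdot)_{\QQ}$ after tensoring with $\QQ$.

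Given $a,a^\prime\in A^1_{hom}(S)_{\QQ}$, I will then pick lifts $\tilde a,\tilde a^\prime\in A^1_{hom}(B)_{\QQ}$ with $\varphi^*\tilde a=a$ and $\varphi^*\tilde a^\prime=a^\prime$, and compute
\[ a\cdot a^\prime\ =\ \varphi^*(\tilde a)\cdot\varphi^*(\tilde a^\prime)\ =\ \varphi^*(\tilde a\cdot\tilde a^\prime)\ =\ 0,\]
where the middle equality uses that $\varphi^*$ is a ring homomorphism on rational Chow rings (both $S$ and $B$ being smooth), and the last uses $A^2(B)_{\QQ}=0$ by dimension. The only thing requiring care is the factorization of the Albanese morphism through the smooth curve $B$ (handled by normalizing $M$ and using smoothness of $S$); beyond that, I do not foresee any serious obstacle.
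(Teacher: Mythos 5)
First, a point of order: the paper does \emph{not} prove this statement. It appears only as Conjecture \ref{nondecomp} and is left open; the author proves just the weaker Proposition \ref{alb1} (non-surjectivity of the intersection map, via the Esnault--Srinivas--Viehweg criterion applied to the vanishing of the cup product $H^1(S,\OO_S)\otimes H^1(S,\OO_S)\to H^2(S,\OO_S)$), and in Remark \ref{motiv} he offers only a \emph{conditional} proof, contingent on the existence of a Chow motive $dc(S)$ governing decomposable cycles. So there is no proof in the paper to compare against, and you are claiming an unconditional proof of something the author treats as open.

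That said, I cannot locate a gap in your argument, and every step rests on standard facts. The crux is the surjectivity of $\varphi^*\colon\pic^0(B)_\QQ\to\pic^0(S)_\QQ=A^1_{hom}(S)_\QQ$, and this does follow as you say: $\alpha$ factors through the normalization $B$ of its one-dimensional image because $S$ is normal; $\alpha^*\colon\pic^0(\alb(S))\to\pic^0(S)$ is an isomorphism by the duality between the Picard and Albanese varieties; and since $\alpha^*=\varphi^*\circ\bar\iota^*$ with $\bar\iota^*(\pic^0(\alb(S)))\subseteq\pic^0(B)$, one gets $\varphi^*(\pic^0(B))\supseteq\pic^0(S)$. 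Once every class in $A^1_{hom}(S)_\QQ$ is pulled back from the curve $B$, the vanishing is immediate: represent $\tilde a,\tilde a'$ by zero-cycles on $B$ with disjoint supports, so that $\varphi^*\tilde a$ and $\varphi^*\tilde a'$ are divisors on $S$ with disjoint supports, whence their product is the zero cycle (equivalently, $\varphi^*$ is a ring homomorphism and $A^2(B)=0$). Note that your argument uses neither $p_g=1$, nor the Ram\'on Mar\'\i\ classification, nor finite-dimensionality: it would show $A^1_{hom}(S)_\QQ\cdot A^1_{hom}(S)_\QQ=0$ in $A^2(S)_\QQ$ for \emph{any} smooth projective surface whose Albanese image is a curve, and it makes Proposition \ref{alb1} an immediate corollary. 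Precisely because the author presents this as something he can only prove conditionally, you should double-check the two load-bearing inputs (the isomorphism $\pic^0(\alb(S))\cong\pic^0(S)$ via $\alpha^*$, and the factorization of $\alpha$ through the normalization) before claiming the conjecture; but as written I see no error, and the paper's own proof of Proposition \ref{alb1} (showing $H^1(S,\OO_S)$ is pulled back from $C/G$, so the cup product vanishes) is exactly the cohomological shadow of your Chow-theoretic argument.
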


\begin{remark}\label{motiv} The ``motivation'' underlying conjecture \ref{nondecomp} is as follows. The map
  \[ A^1_{hom}(S)_{\QQ}\otimes A^1_{hom}(S)_{\QQ}\ \to\ A^2_{AJ}(S)_{\QQ} \]
should be somehow related to the cup product map
  \[  H^1(S,\OO_S)\otimes H^1(S,\OO_S)\ \to\ H^2(S,\OO_S)\ .\]
  But we have seen this cup product map is zero.
  
  I can prove conjecture \ref{nondecomp} is true if there exists a Chow motive $dc(S)\in\MM_{\rm rat}$ responsible for the decomposable cycles, i.e. such that
    \[ A^\ast(dc(S))=\ima\bigl( A^1(S)_{\QQ}\otimes A^1(S)_{\QQ}\ \to\ A^2(S)_{\QQ}\bigr)\ . \]
  Can one construct such a motive $dc(S)$ ? This is likely to be related to the concept of ``multiplicative Chow--K\"unneth decomposition'' of Shen--Vial \cite{SV}, \cite{V6}.
  \end{remark}  
  
 \begin{remark} Let $S$ be a surface with finite--dimensional motive, and suppose that the cup product map
      \[  cp\colon\ \ H^1(S,\OO_S)\otimes H^1(S,\OO_S)\ \to\ H^2(S,\OO_S)\ \]  
      is surjective. Then it follows from \cite[Theorem 3]{moimult} that also the intersection product
      \[  ip\colon\ \ A^1_{hom}(S)_{\QQ}\otimes A^1_{hom}(S)_{\QQ}\ \to\ A^2_{AJ}(S)_{\QQ} \]
      is surjective (i.e., all Abel--Jacobi trivial zero--cycles are decomposable). 
        However, the conjectural implication discussed in remark \ref{motiv} (i.e., if $cp$ is zero then also $ip$ is zero) appears to be more difficult to prove.
      \end{remark}

\vskip1cm
\begin{nonumberingt} The ideas developed in this note grew into being during the Strasbourg 2014---2015 groupe de travail based on the monograph \cite{Vo}. Thanks to all the participants of this groupe de travail for the pleasant and stimulating atmosphere. I am grateful to the referee and to the editor for helpful comments.
Many thanks to Yasuyo, Kai and Len for accepting to be honorary members of the Schiltigheim Math Research Institute.
\end{nonumberingt}

\vskip1cm


\begin{thebibliography}{dlPG99}






\bibitem{An} Y. Andr\'e, Motifs de dimension finie (d'apr\`es S.-I. Kimura, P. O'Sullivan,...), S\'eminaire Bourbaki 2003/2004, Ast\'erisque 299 Exp. No. 929, viii, 115---145,




\bibitem{BCP} I. Bauer, F. Catanese and R. Pignatelli, Complex surfaces of general type: Some recent progress, in: Global aspects of complex geometry (F. Catanese et alii, eds.), Springer--Verlag Berlin Heidelberg 2006,

\bibitem{BF} I. Bauer and D. Frapporti, Bloch's conjecture for generalized Burniat type surfaces with $p_g=0$, Rend. Circ. Mat. Palermo 64 (2015), 27---42,


\bibitem{Beau} A. Beauville, Sur l'anneau de Chow d'une vari\'et\'e ab\'elienne, Math. Ann. 273 (1986), 647---651,


\bibitem{BV} A. Beauville and C. Voisin, On the Chow ring of a $K3$ surface, J. Alg. Geom. 13 (2004), 417---426,

\bibitem{Bl2} S. Bloch, Some elementary theorems about algebraic cycles on abelian varieties, Invent. Math. 37 (1976), 215---228,

\bibitem{B} S. Bloch, Lectures on algebraic cycles, Duke Univ. Press Durham 1980,

\bibitem{BKS} S. Bloch, Kas and A. Lieberman, Zero cycles on surfaces with $p_g=0$, Comp. Math. 33 no. 2 (1976), 135---145,


\bibitem{BS} S. Bloch and V. Srinivas, Remarks on correspondences and algebraic cycles, American Journal of Mathematics Vol. 105, No 5 (1983), 1235---1253,



\bibitem{Br} M. Brion, Log homogeneous varieties, in: Actas del XVI Coloquio Latinoamericano de Algebra, 
Revista Matem\'atica Iberoamericana, Madrid 2007,
arXiv: math/0609669,


\bibitem{CM} 
M. de Cataldo and L. Migliorini, The Chow groups and the motive of the Hilbert scheme of points on a
surface, Journal of Algebra 251 no. 2 (2002), 824---848,


\bibitem{Cat} F. Catanese, Fibred surfaces, varieties isogenous to a product and related moduli spaces, Amer. J. Math. 122 no. 1 (2000), 1---44,







\bibitem{D} P. Deligne, La conjecture de Weil pour les surfaces $K3$, Invent. Math. 15 (1972), 206---226,




\bibitem{ESV} H. Esnault, V. Srinivas and E. Viehweg, Decomposability of Chow groups implies decomposability of cohomology, in :``Journ\'ees de G\'eom\'etrie Alg\'ebrique d'Orsay, Juillet 1992'', Ast\'erisque 218 (1993), 227---242, 


\bibitem{LFu} L. Fu, Decomposition of small diagonals and Chow rings of hypersurfaces and Calabi--Yau complete intersections, Advances in Mathematics (2013), 894---924,














\bibitem{GP} V. Guletski\u{\i} and C. Pedrini, The Chow motive of the Godeaux surface, in:
Algebraic Geometry, a volume in memory of Paolo Francia (M.C. Beltrametti et alii, editors),
Walter de Gruyter, Berlin New York, 2002,






\bibitem{Iv} F. Ivorra, Finite dimensional motives and applications (following S.-I. Kimura, P. O'Sullivan and others), in:
        Autour des motifs, Asian-French summer school on algebraic geometry and number theory,
       Volume III, Panoramas et synth\`eses, Soci\'et\'e math\'ematique de France 2011,
    
\bibitem{Iy} J. Iyer, Murre's conjectures and explicit Chow--K\"unneth projectors for varieties with a nef tangent bundle, Transactions of the Amer. Math. Soc. 361 (2008), 1667---1681,

\bibitem{Iy2} J. Iyer, Absolute Chow--K\"unneth decomposition for rational homogeneous bundles and for log homogeneous varieties, Michigan Math. Journal
 Vol.60, 1 (2011), 79---91,
          




\bibitem{J4} U. Jannsen, On finite--dimensional motives and Murre's conjecture, in: Algebraic cycles and motives (J. Nagel and C. Peters, eds.), Cambridge University Press, Cambridge 2007,

\bibitem{KMP} B. Kahn, J. Murre and C. Pedrini, On the transcendental part of the motive of a surface, in: Algebraic cycles and motives (J. Nagel and C. Peters, eds.), Cambridge University Press, Cambridge 2007,


\bibitem{Kim} S. Kimura, Chow groups are finite dimensional, in some sense,
Math. Ann. 331 (2005), 173---201,









\bibitem{moi} R. Laterveer, Some results on a conjecture of Voisin for surfaces of geometric genus one, to appear in Boll. Unione Mat. Italiana, 


\bibitem{moimult} R. Laterveer, On a multiplicative version of Bloch's conjecture, to appear in Beitr\"age zum Algebra und Geometrie,












\bibitem{Mum} D. Mumford, Rational equivalence of $0$--cycles on surfaces, J. Math. Kyoto Univ. Vol. 9 No 2 (1969), 195---204,


\bibitem{MNP} J. Murre, J. Nagel and C. Peters, Lectures on the theory of pure motives, Amer. Math. Soc. University Lecture Series 61, Providence 2013,



\bibitem{OG} K. O'Grady, Decomposable cycles and Noether--Lefschetz loci, arXiv:1502.07897v2,






\bibitem{P} C. Pedrini, On the finite dimensionality of a $K3$ surface, Manuscripta Mathematica 138 (2012), 59---72,



\bibitem{PW} C. Pedrini and C. Weibel, Some surfaces of general type for which Bloch's conjecture holds, to appear in: Period Domains, Algebraic Cycles, and Arithmetic, Cambridge Univ. Press, 2015,



\bibitem{RM} J. Ram\'on Mar\'\i, On the Hodge conjecture for products of certain surfaces, Collect. Math. 59, 1 (2008), 1---26,



\bibitem{Ro} A.A. Rojtman, The torsion of the group of 0--cycles modulo rational equivalence, Annals of Mathematics 111 (1980), 553---569,



\bibitem{Sc} T. Scholl, Classical motives, in: Motives (U. Jannsen et alii, eds.), Proceedings of Symposia in Pure Mathematics Vol. 55 (1994), Part 1, 



\bibitem{SV} M. Shen and C. Vial, The Fourier transform for certain hyperK\"ahler fourfolds, Memoirs of the AMS 240 (2016), no.1139,






\bibitem{V2} C. Vial, Projectors on the intermediate algebraic Jacobians, New York J. Math. 19 (2013), 793---822,

\bibitem{V3} C. Vial, Remarks on motives of abelian type, to appear in Tohoku Math. J.,

\bibitem{V4} C. Vial, Niveau and coniveau filtrations on cohomology groups and Chow groups, Proceedings of the LMS 106(2) (2013), 410---444,

\bibitem{V5} C. Vial, Chow--K\"unneth decomposition for $3$-- and $4$--folds fibred by varieties with trivial Chow group of zero--cycles, J. Alg. Geom. 24 (2015), 51---80,

\bibitem{V6} C. Vial, On the motive of some hyperk\"ahler varieties, J. fur Reine u. Angew. Math.,


\bibitem{V9} C. Voisin, Remarks on zero--cycles of self--products of varieties, in: Moduli of vector bundles, Proceedings of the Taniguchi Congress  (M. Maruyama,  ed.), Marcel Dekker New York Basel Hong Kong 1994,




\bibitem{V13} C. Voisin, Chow rings and decomposition theorems for $K3$ surfaces and Calabi--Yau hypersurfaces, Geom. Topol.
16 (2012), 433---473,



\bibitem{V8} C. Voisin, Bloch's conjecture for Catanese and Barlow surfaces, J. Differential Geometry 97 (2014), 149---175,

\bibitem{Vo} C. Voisin, Chow Rings, Decomposition of the Diagonal, and the Topology of Families, Princeton University Press, Princeton and Oxford, 2014,


\bibitem{Xu} Z. Xu, Algebraic cycles on a generalized Kummer variety, arXiv:1506.04297v1,

\bibitem{Zuc} F. Zucconi, Generalized hyperelliptic surfaces, Transactions of the Amer. Math. Soc. 355 No. 10 (2003), 4045---4059.

\end{thebibliography}
\end{document}